\theoremstyle{plain}
\newtheorem{theorem}{Theorem}
\newtheorem{lemma}{Lemma}
\theoremstyle{definition}
\newtheorem{definition}{Definition}
\newtheorem{remark}{Remark}
\newcommand\blfootnote[1]{%
\begingroup
\renewcommand\thefootnote{}\footnote{#1}%
\addtocounter{footnote}{-1}%
\endgroup
}
\begin{document}	
	
\title{Pharmacokinetic/Pharmacodynamic Anesthesia Model\\ 
Incorporating psi-Caputo Fractional Derivatives\blfootnote{This 
is a preprint version of the paper published open access in 
\emph{Computers in Biology and Medicine} 
(Print ISSN: 0010-4825; Online ISSN: 1879-0534),
doi: \href{https://doi.org/10.1016/j.compbiomed.2023.107679}{https://doi.org/10.1016/j.compbiomed.2023.107679}.}}
	
\author{Mohamed Abdelaziz Zaitri$^{1, 2}$\\
\texttt{zaitri@ua.pt}
\and
Hanaa Zitane$^1$\\
\texttt{h.zitane@ua.pt}
\and Delfim F. M. Torres$^{1,}$\thanks{Corresponding author.}\\
\texttt{delfim@ua.pt}}
\date{$^1$Center for Research and Development in Mathematics and Applications (CIDMA),
Department of Mathematics, University of Aveiro, 3810-193 Aveiro, Portugal\\[0.3cm]
$^2$Department of Mathematics, University of Djelfa, 17000 Djelfa, Algeria}
	
\maketitle

% -----------------------------------------------

\begin{abstract}
We present a novel Pharmacokinetic/Pharmacodynamic (PK/PD) model for the induction 
phase of anesthesia, incorporating the $\psi$-Caputo fractional derivative. By employing 
the Picard iterative process, we derive a solution for a nonhomogeneous $\psi$-Caputo 
fractional system to characterize the dynamical behavior of the drugs distribution 
within a patient's body during the anesthesia process. 
To explore the dynamics of the fractional anesthesia model, we 
perform numerical analysis on solutions involving various functions of $\psi$ and 
fractional orders. All numerical simulations are conducted using the MATLAB 
computing environment. Our results suggest that the $\psi$ functions 
and the fractional order of differentiation have an important role
in the modeling of individual-specific characteristics,
taking into account the complex interplay between drug concentration 
and its effect on the human body. This innovative model 
serves to advance the understanding of personalized drug responses 
during anesthesia, paving the way for more precise and tailored 
approaches to anesthetic drug administration.

\medskip
	
\noindent {\bf Keywords:} Pharmacokinetic/Pharmacodynamic model;
fractional calculus; $\psi$-Caputo fractional derivative;
numerical simulations.

\medskip

\noindent {\bf MSC 2020:} 34A08, 92C45.
\end{abstract}

% -------------------------------------------
	
\section{Introduction}

Pharmacokinetic/Pharmacodynamic (PK/PD) modeling is a mathematical approach 
used in pharmacology to study the relationship between drug concentrations (Pharmacokinetics) 
and their effects on the body (Pharmacodynamics) \cite{Morse,Singh}. 
The PK/PD models help researchers and clinicians to understand how drugs 
are absorbed, distributed, metabolized, and eliminated from the body \cite{Beck}.

The PK/PD models integrate Pharmacokinetic and Pharmacodynamic data to characterize 
the time course of drug action \cite{Meibohm} . These models can be simple or complex, 
depending on the drug's characteristics and the purpose of the modeling. The parameters 
of these models were fitted by Schnider et al. in \cite{Schnider}. Some common 
types of PK/PD models include:
\begin{enumerate}
\item Linear PK/PD models. The basic structure of a linear PK/PD model consists 
of two main components: the Pharmacokinetic component, which describes the drug 
concentration-time profile in the body, and the Pharmacodynamic component, 
which relates to the drug concentration to the observed effect \cite{Gabrielsson}.

\item Non-linear PK/PD models. These models can incorporate various components, 
such as saturable drug elimination, receptor binding kinetics, and indirect response models. 
These models provide a more accurate representation of the concentration-effect 
relationship and can be used to optimize dosing regimens and predict drug responses 
in different populations \cite{Jusko}.
	
\item Mechanistic PK/PD models. These models incorporate known biological mechanisms, 
such as drug-receptor interactions, enzyme kinetics, or signal transduction pathways. 
They provide a more detailed representation of drug action but require more data 
and knowledge about the underlying biology \cite{Nielsen}.
	
\item Population  Pharmacokinetic  models. These models are used to describe  
the  time  course  of  drug  exposure  in  patients  and  to  investigate  
sources  of  variability  in  patient  exposure.  They  can  be  used  to  
simulate  alternative  dose  regimens,  allowing  for  informed  assessment 
of dose regimens before study conduct \cite{delAmo,Mould}. 
\end{enumerate}
 
In recent years, the field of fractional derivatives has emerged as a promising approach 
to model and understand complex biological processes characterized by non-integer 
order dynamics \cite{Ionescu,LiF,Pandey}. 
This unique mathematical framework has found diverse applications in various areas of biology, 
where traditional integer-order calculus falls short in capturing the intricacies 
of these systems \cite{Baleanu3,MR4560070}.

One prominent field where fractional derivatives have made significant contributions is Neurobiology. 
By employing fractional calculus, researchers have been able to delve into the dynamics of neural systems 
with a greater level of realism. This includes modeling the behavior of neurons, synaptic transmission, 
and the propagation of nerve impulses. The incorporation of fractional derivatives enables the consideration 
of memory effects and non-local behavior, providing a more accurate representation of neural processes \cite{Magin}.

Another area where fractional derivatives have shown their efficacy is in Biomedical Signal Processing. 
Biomedical signals, such as electroencephalograms (EEG), electrocardiograms (ECG), and blood pressure signals, 
are often complex and exhibit non-integer order dynamics. By utilizing fractional order filters and operators, 
meaningful information can be extracted from these signals, leading to improved analysis and interpretation. 
Fractional calculus has proven to be a valuable tool in enhancing our understanding 
of these vital physiological signals~\cite{Ferdi}.

Furthermore, the field of Cancer Modeling has witnessed the application 
of fractional derivatives \cite{Pachauri}. 
Tumor growth and the intricate interactions between cancer cells and the immune system present complex 
dynamics that can be effectively captured using fractional order models. By incorporating non-local 
effects and memory into the modeling process, fractional derivatives provide a comprehensive framework 
for studying cancer progression. This approach has the potential to shed light on the underlying 
mechanisms and aid the development of novel therapeutic strategies~\cite{Arfan,Turkyilmazoglu}.

The field of Pharmacokinetics and Pharmacodynamics plays a crucial role in understanding the behavior 
of drugs in biological systems \cite{Singh}. Traditional PK/PD models have predominantly relied on integer-order 
derivatives to describe various processes involved in drug absorption, distribution, metabolism, 
and elimination \cite{Eleveld,Morse,ZST}. However, these models often fall short in accurately capturing 
the complexity of Pharmacokinetic behaviors \cite{Brunton,Hennion}.

Fractional calculus offers a promising alternative by providing a more precise representation 
of these intricate dynamics \cite{Sopasakis}. A number of studies have shown that certain 
drugs follow an anomalous kinetics that can hardly be represented by classical models. 
Indeed, fractional-order pharmacokinetics models have proved to be better suited to represent 
the time course of these drugs in the body and also they can describe memory effects and 
a power-law terminal phase. Therefore, they give rise to more complex kinetics that better 
reflects the complexity of the human body. In \cite{Hennion}, a fractional one-compartment 
model with a continuous intravenous infusion is considered, where it allows to determine 
how the infusion rate influences the total amount of drug in the compartment. Moreover, 
in the case of multiple dosing administration, recurrence relations for the doses and 
the dosing times that also prevent drug accumulation are presented. Hence, in \cite{copot}, 
a PK model was introduced employing a fractional-order approach akin to mammillary dynamics. 
This model was specifically designed to incorporate considerations of tissue entrapment, 
thereby altering the anticipated drug concentration profiles. The proposed model shows 
a limitation in data fit profiles, without transgressing the fundamental principles 
of mass balance and physiological states. The mathematical study of the amount 
of drug administered as a continuous intravenous infusion or oral dose for fractional-order 
mammillary type models is investigated in \cite{MTCG}.

For pharmacokinetic and pharmacodynamic models, the first fractional one 
was introduced in \cite{Verotta}. By utilizing the Caputo fractional derivative, 
the authors presented a fresh perspective on the intricate relationships 
between drug dosages, absorption rates, and therapeutic outcomes. Since then, 
some few applications of fractional PK/PD models have appeared in the literature 
\cite{Atici,IonescuC}. In \cite{Atici}, the authors study the discrete and discrete 
fractional representation of a PK/PD model describing tumor growth and anti-cancer 
effects in continuous time considering a specific times scale, while in \cite{IonescuC} 
a fractional PK/PD model in anesthesia is developed to describe the nonlinear
characteristics of the PK/PD patient models.

In this article, we propose a novel fractional four-compartmental PK/PD model 
for the induction phase of anesthesia, employing the $\psi$-Caputo fractional 
derivative. This innovative approach involves replacing each ordinary derivative 
in the classical PK/PD model investigated in \cite{Said,ZST} with the 
$\psi$-Caputo fractional derivative of order $\alpha \in (0,1]$ \cite{Almeida}. 

By incorporating fractional derivatives into the model, we aim to capture the intricate 
dynamics of anesthesia induction more accurately. Furthermore, based on the Picard 
iterative process, we establish a solution for 
the nonhomogeneous $\psi$-Caputo fractional differential 
system of equations in terms of matrix Mittag-Leffler functions. The solution provides a deeper 
understanding of the dynamics and behavior of the anesthesia process, taking into account 
the complex interplay between drug concentration and its effect on the body.

To validate the effectiveness of our proposed model, we conduct numerical simulations. 
Specifically, we determine the optimal anesthesia dosage for a 53-year-old male weighing 77~Kg 
and measuring 177~cm, while considering the minimum treatment time \cite{Said}. 
These simulations enable us to evaluate the performance and applicability 
of the fractional PK/PD model in a practical context.

Through the article, we aim to demonstrate the potential of fractional derivatives 
in advancing our understanding of biology, particularly in the field of PK/PD modeling. 
By incorporating fractional calculus into these models, we can unlock new insights 
and improve our ability to predict and optimize drug behavior within biological systems.

The paper is organized as follows. In Section~\ref{Preliminaries}, we provide a review 
of several definitions and properties of fractional calculus that are essential for our 
subsequent discussions (Section~\ref{subsec:2.1}); and we recall the 
Pharmacokinetic and Pharmacodynamic models proposed by Bailey et al. \cite{Bailey} 
and Schnider \cite{Schnider} (Section~\ref{Section:PK:PD:model}), 
discussing their bispectral index (Section~\ref{subsec:2.3}) 
and the equilibrium points (Section~\ref{subsec:2.4}).
Our original contributions are then given in Section~\ref{sec:MR}:
we obtain a solution to a general linear nonhomogeneous $\psi$-Caputo fractional system
(Section~\ref{Solution:nonhomegenous:fractional:systems});
we introduce a novel PK/PD model for the induction phase 
of anesthesia based on the $\psi$-Caputo fractional derivative 
(Section~\ref{fractional:PK/PD:model}); and finally we compute 
the model parameters using the Schnider model \cite{Schnider}, presenting the numerical results 
of the fractional PK/PD model corresponding to different $\psi$ functions and fractional orders
(Section~\ref{Numerical:example}). We conclude with Section~\ref{sec:disc}, 
discussing the implications and limitations of our results, and with Section~\ref{sec:conc}, 
summarizing our findings and outlining potential directions for future research.

% -------------------------------------------
	
\section{Preliminaries}
\label{Preliminaries}

In this section, we recall several definitions, 
properties of fractional calculus,
and the classical PK/PD model,
that will be used in the sequel.

\subsection{Fundamental definitions and results}
\label{subsec:2.1}

Throughout the paper, $\psi$ designates a function of class $C^{1}[a, b]$ 
such that $\psi'(t) > 0$, for all $t \in [a, b]$.	

\begin{definition}[See \cite{der2}]
\label{DefInt} 
The left $\psi$-Riemann-Liouville
fractional integral of a function $f$ of order $\alpha\in (0,1)$ is defined by
$$I_{a}^{\alpha,\psi}f(t)=\dfrac{1}{\Gamma(\alpha)}
\int_{a}^{t}\psi'(s)\left(\psi(t)-\psi(s)\right)^{\alpha-1}f(s)\, \mathrm{d}s,
$$	
where $\Gamma(\cdot)$ is the Euler Gamma function.
\end{definition}

\begin{remark}
We remark that $\Gamma(x + 1) = x\Gamma(x)$, for all $x > 0$, 
and for any positive integer $n$ we have
$\Gamma(n + 1) = n!$.	
\end{remark}

\begin{definition}[See \cite{der2}]
The $\psi$-Caputo fractional derivative of a function $f$ 
of order $\alpha\in (0,1)$ can be defined as follows:
$$	
{}^C \!D_{a}^{\alpha,\psi}f(t)=\dfrac{1}{\Gamma(1-\alpha)}
\int_{a}^{t}\left(\psi(t)-\psi(a)\right)^{-\alpha}f'(s)\, \mathrm{d}s.
$$
\end{definition}

We have the following properties of the fractional operators 
with respect to function $\psi$.

\begin{lemma}[See \cite{der2}]
\label{lemm1}
Let	$\Re(\alpha)>0$ and $\Re(\beta)>0$. Then,
$$
I_{a}^{\alpha,\psi}\left(f(x)-f(a)   \right)^{\beta-1}(t)
=\dfrac{\Gamma(\beta)}{\Gamma(\beta+\alpha)}
\left(f(t)-f(a)\right)^{\beta-\alpha-1}.
$$	
\end{lemma}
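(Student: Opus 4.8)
The plan is to reduce the claim to the classical power rule for the fractional integral by exploiting the change of variables induced by $\psi$. Here the function appearing in the integrand should be read as $\psi$ itself (so that $f=\psi$), since otherwise the factors $(\psi(t)-\psi(s))$ coming from the kernel of $I_a^{\alpha,\psi}$ would not recombine with the integrand; I also expect the exponent on the right-hand side to read $\beta+\alpha-1$ rather than $\beta-\alpha-1$, matching the standard lifting of the order by $\alpha$.

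First I would write out $I_a^{\alpha,\psi}$ applied to the map $s\mapsto(\psi(s)-\psi(a))^{\beta-1}$ directly from Definition~\ref{DefInt}, obtaining
$$I_a^{\alpha,\psi}\bigl(\psi(\cdot)-\psi(a)\bigr)^{\beta-1}(t)=\frac{1}{\Gamma(\alpha)}\int_a^t\psi'(s)\bigl(\psi(t)-\psi(s)\bigr)^{\alpha-1}\bigl(\psi(s)-\psi(a)\bigr)^{\beta-1}\,\mathrm{d}s.$$
Then I would perform the substitution $u=\frac{\psi(s)-\psi(a)}{\psi(t)-\psi(a)}$, which is legitimate because $\psi'>0$ makes $\psi$ strictly increasing, so that $\psi(t)-\psi(a)>0$ for $t>a$ and the map $s\mapsto u$ is a smooth bijection of $[a,t]$ onto $[0,1]$. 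Under this substitution one has $\psi'(s)\,\mathrm{d}s=(\psi(t)-\psi(a))\,\mathrm{d}u$, $\psi(s)-\psi(a)=u\,(\psi(t)-\psi(a))$, and $\psi(t)-\psi(s)=(1-u)(\psi(t)-\psi(a))$. Collecting the powers of $(\psi(t)-\psi(a))$ gives total exponent $(\alpha-1)+(\beta-1)+1=\alpha+\beta-1$, so the expression factors as $\frac{(\psi(t)-\psi(a))^{\alpha+\beta-1}}{\Gamma(\alpha)}\int_0^1 u^{\beta-1}(1-u)^{\alpha-1}\,\mathrm{d}u$.

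The final and really only substantive step is to recognize the remaining integral as the Euler Beta function $B(\beta,\alpha)=\int_0^1 u^{\beta-1}(1-u)^{\alpha-1}\,\mathrm{d}u$ and to invoke the identity $B(\beta,\alpha)=\frac{\Gamma(\beta)\Gamma(\alpha)}{\Gamma(\beta+\alpha)}$. The prefactor $1/\Gamma(\alpha)$ then cancels the $\Gamma(\alpha)$ in the numerator, yielding $\frac{\Gamma(\beta)}{\Gamma(\beta+\alpha)}(\psi(t)-\psi(a))^{\beta+\alpha-1}$, which is the asserted formula (up to the sign correction on the exponent noted above).

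The main obstacle is technical rather than conceptual: one must justify the change of variables and the convergence of the integral near the endpoints, which holds precisely because $\Re(\alpha)>0$ and $\Re(\beta)>0$ keep both $(1-u)^{\alpha-1}$ and $u^{\beta-1}$ integrable on $[0,1]$; and if one insists on complex $\alpha,\beta$, one must appeal to the Beta--Gamma relation in that generality rather than only for positive reals.
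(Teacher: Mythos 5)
Your proof is correct. Note that the paper gives no proof of this lemma at all---it is quoted from \cite{der2}---and your argument (reduce to the Euler Beta integral via the substitution $u=\bigl(\psi(s)-\psi(a)\bigr)/\bigl(\psi(t)-\psi(a)\bigr)$ and invoke $B(\beta,\alpha)=\Gamma(\beta)\Gamma(\alpha)/\Gamma(\beta+\alpha)$) is exactly the standard proof found in that reference and in \cite{Almeida}. You are also right on both counts about the statement as printed: the integrand must be $\bigl(\psi(x)-\psi(a)\bigr)^{\beta-1}$ rather than $\bigl(f(x)-f(a)\bigr)^{\beta-1}$ for the kernel to recombine, and the exponent on the right-hand side should be $\beta+\alpha-1$ (the printed $\beta-\alpha-1$ would accompany the fractional \emph{derivative}, not the integral); both are typographical errors in the paper.
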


\begin{theorem}[See \cite{Almeida}]
\label{Composition} 
Let $\alpha \in (0,1)$ and $f\in C^{1}(a,b)$. Then,
$$
I_{a}^{\alpha,\psi}{}^C \!D_{a}^{\alpha,\psi}f(t)=f(t)-f(a).
$$
\end{theorem}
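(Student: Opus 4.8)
The plan is to collapse the composition down to an integral of order one by exploiting the semigroup property of the $\psi$-Riemann--Liouville integral. First I would rewrite the $\psi$-Caputo derivative in the ``integral form''
$${}^C \!D_{a}^{\alpha,\psi}f(t) = I_{a}^{1-\alpha,\psi}\!\left(\frac{f'}{\psi'}\right)(t),$$
which follows by applying Definition~\ref{DefInt} with order $1-\alpha$ to the function $f'/\psi'$: the factors $\psi'(s)$ and $1/\psi'(s)$ cancel inside the integrand, leaving precisely the kernel $\bigl(\psi(t)-\psi(s)\bigr)^{-\alpha}$ appearing in the Caputo derivative. This step is legitimate because $f \in C^{1}(a,b)$ and $\psi' > 0$ on $[a,b]$, so $f'/\psi'$ is continuous and the integral converges for $\alpha \in (0,1)$.

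Next I would apply $I_{a}^{\alpha,\psi}$ to both sides and invoke the semigroup law
$$I_{a}^{\alpha,\psi} I_{a}^{\beta,\psi} g = I_{a}^{\alpha+\beta,\psi} g, \qquad \alpha,\beta>0,$$
with $\beta = 1-\alpha$, to obtain $I_{a}^{\alpha,\psi}\,{}^C \!D_{a}^{\alpha,\psi}f = I_{a}^{1,\psi}(f'/\psi')$. Since the kernel of $I_{a}^{1,\psi}$ is $\bigl(\psi(t)-\psi(s)\bigr)^{0} = 1$ weighted by $\psi'(s)$, it reduces to the ordinary integral $\int_{a}^{t} \psi'(s)\,[f'(s)/\psi'(s)]\,\mathrm{d}s = \int_{a}^{t} f'(s)\,\mathrm{d}s$, and the fundamental theorem of calculus yields $f(t)-f(a)$, as claimed.

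The main work is establishing the semigroup identity. I would prove it by writing out the double integral defining $I_{a}^{\alpha,\psi} I_{a}^{\beta,\psi} g(t)$, applying Fubini's theorem to exchange the order of integration (justified by continuity of $g$ and positivity of $\psi'$), and then evaluating the resulting inner integral
$$\int_{u}^{t} \psi'(s)\,\bigl(\psi(t)-\psi(s)\bigr)^{\alpha-1}\bigl(\psi(s)-\psi(u)\bigr)^{\beta-1}\,\mathrm{d}s$$
through the substitution $r = \bigl(\psi(s)-\psi(u)\bigr)/\bigl(\psi(t)-\psi(u)\bigr)$. This change of variable turns the inner integral into $\bigl(\psi(t)-\psi(u)\bigr)^{\alpha+\beta-1}$ times the Beta integral $\int_{0}^{1} r^{\beta-1}(1-r)^{\alpha-1}\,\mathrm{d}r = \Gamma(\alpha)\Gamma(\beta)/\Gamma(\alpha+\beta)$, and the Gamma factors cancel exactly the normalizing constants, leaving $I_{a}^{\alpha+\beta,\psi}g(t)$.

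The subtle point to watch is integrability near the endpoint singularities: the factor $\bigl(\psi(t)-\psi(s)\bigr)^{\alpha-1}$ blows up as $s \to t$ and $\bigl(\psi(s)-\psi(u)\bigr)^{\beta-1}$ as $s \to u$, but for $\alpha,\beta \in (0,1)$ both singularities are integrable, which is exactly what legitimizes the Fubini exchange and the Beta-function evaluation. Alternatively, one could bypass the general semigroup argument by invoking Lemma~\ref{lemm1} on power functions combined with an approximation argument, but the direct Fubini computation is cleaner and self-contained here.
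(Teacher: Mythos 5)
The paper does not prove this theorem: it is quoted directly from the cited reference of Almeida, so there is no in-paper argument to compare against. Your proof is correct and is essentially the standard one from that reference: represent the $\psi$-Caputo derivative as ${}^C\!D_{a}^{\alpha,\psi}f = I_{a}^{1-\alpha,\psi}\bigl(f'/\psi'\bigr)$, apply the semigroup law $I_{a}^{\alpha,\psi}I_{a}^{\beta,\psi}=I_{a}^{\alpha+\beta,\psi}$ (proved exactly as you do, by Fubini plus the Beta integral after the substitution $r=(\psi(s)-\psi(u))/(\psi(t)-\psi(u))$), and finish with the fundamental theorem of calculus. Your attention to the integrability of the two endpoint singularities for $\alpha,\beta\in(0,1)$ is the right justification for the Fubini step. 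One remark: your first step tacitly uses the kernel $\bigl(\psi(t)-\psi(s)\bigr)^{-\alpha}$ in the Caputo derivative, which is the correct (standard) definition; the paper's Definition of ${}^C\!D_{a}^{\alpha,\psi}$ contains a typographical slip, writing $\bigl(\psi(t)-\psi(a)\bigr)^{-\alpha}$, under which the stated identity would be false, so you were right to work with the corrected kernel.
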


The Mittag--Leffler function appears naturally in the solution of fractional differential 
equations and in its various applications: see \cite{Mittaglefller,MR3914412} and references therein.

\begin{definition}[See \cite{Mittaglefller}]
The Mittag--Leffler function of one parameter, of a matrix $A$, is defined as
\begin{equation}
\label{eq1}
E_{\alpha}(A)=\sum_{l=0}^{+\infty}\frac{A^{l}}{\Gamma(\alpha l+1)},
\quad Re(\alpha)>0.
\end{equation}
\end{definition}

\begin{definition}[See \cite{Mittaglefller}]
The Mittag--Leffler function of two parameters, of a matrix $A$, is defined as
\begin{equation}
\label{eq2}
E_{\alpha,\alpha'}(A)=\sum_{l=0}^{+\infty}\frac{A^{l}}{\Gamma(\alpha l+ \alpha')},
\quad Re(\alpha)>0,~\alpha'>0.
\end{equation}	
\end{definition}

\begin{remark}
The matrix exponential function is a special case 
of the matrix Mittag--Leffler function \cite{MR3914412}.
For $\alpha'=1$, we have $E_{\alpha,1}(A)=E_{\alpha}(A)$ 
and $E_{1,1}(A)=e^{A}$.		
\end{remark}

We now recall the notion of generalized convolution integral.

\begin{definition}[See  \cite{JaradAbdel}]
\label{Defa}
Let $f$ and $g$ be two functions which are piecewise continuous at
any interval $[a, b]$ and of exponential order. The generalized convolution
of $f$ and $g$ is defined by	
$$
\left(f\ast_{\psi}g \right)(t)= \int_{a}^{t}f(s)g\left(\psi^{-1}(\psi(t)
+\psi(a)-\psi(s))\right)\psi'(s)\, \mathrm{d}s.	   
$$	
\end{definition}

% -----------------

\subsection{The classical PK/PD model: state of the art}
\label{Section:PK:PD:model}

The Pharmacokinetic/Pharmacodynamic (PK/PD) model comprises four compartments: 
intramuscular blood $(y_1(t))$, muscle $(y_2(t))$, fat $(y_3(t))$, and the effect site $(y_4(t))$. 
The inclusion of the effect site compartment (representing the brain) is necessary to account for 
the finite equilibration time between the central compartment and the central nervous system 
concentrations \cite{Bailey}. This model is employed to characterize the distribution of drugs 
within a patient's body and can be mathematically described by a four-dimensional dynamical system:
\begin{equation} 
\label{model:PK/PD} 
\left\{
\begin{array}{l l}
\dot{y}_1(t)= -(a_{1\,0}+a_{1\,2}+a_{1\,3})\,y_1(t)+a_{2\,1}\,y_2(t)+ a_{3\,1}\,y_3(t)+u(t),\\		
\dot{y}_2(t)= a_{1\,2}\,y_1(t) -a_{2\,1} \, y_2(t),\\	
\dot{y}_3(t)= a_{1\,3}\,	y_1(t)-a_{3\,1}\,y_3(t),\\		
\dot{y}_4(t)= \frac{a_{e\,0}}{v_1}\,y_1(t) -a_{e\,0}\, y_4(t),
\end{array}\right.
\end{equation}
subject to the initial conditions
\begin{equation}
\label{initial:state}
y_1(0)=y_2(0)=y_3(0)=y_4(0)= 0,
\end{equation}
where $y_1(t)$, $y_2(t)$, $y_3(t)$ and $y_4(t)$ represent, respectively, 
the masses of the propofol in the compartments of blood, muscle, fat, 
and effect site at time $t$. The control $u(t)$ represents the continuous 
infusion rate of the anesthetic. The parameters $a_{1\,0}$ and $a_{e\,0}$ represent, 
respectively, the rate of clearance from the central compartment and the effect site. 
The parameters $a_{1\,2}$, $a_{1\,3}$, $a_{2\,1}$, $a_{3\,1}$ and $a_{e\,0}/v_1$ are 
the transfer rates of the drug between compartments. All these parameters depend on age, 
weight, height and gender, and the relations can be found in Table~\ref{Table:Schnider}.

A schematic diagram of the dynamical control system \eqref{model:PK/PD} 
is given in Figure~\ref{schema01}. 
% -----------------------------
\begin{figure}[!htb]
\begin{center}
\includegraphics[scale=0.8]{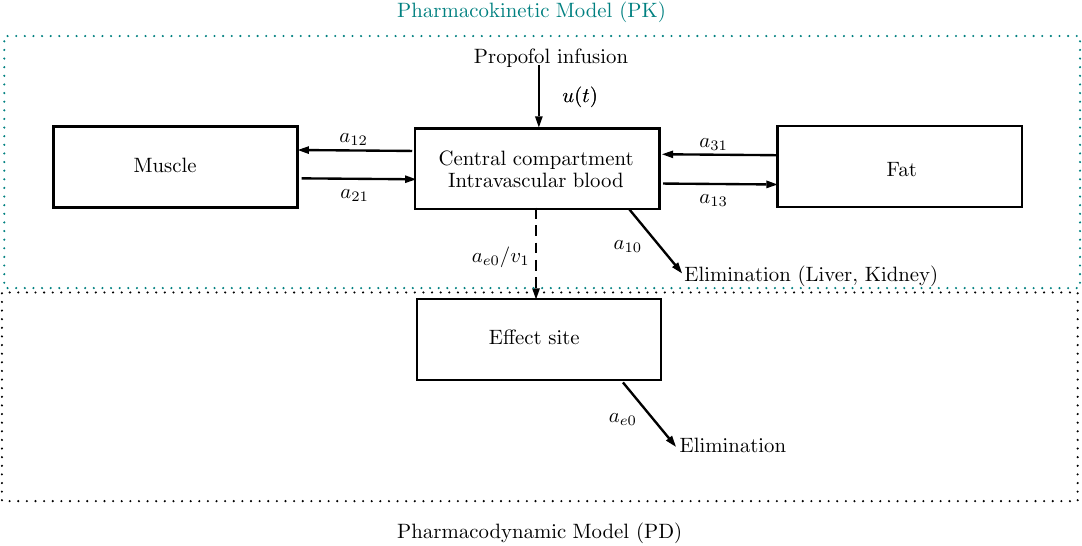}
\caption{Schematic diagram of the PK/PD model with the effect site compartment 
of Bailey and Haddad \cite{Bailey}.}
\label{schema01}
\end{center}
\end{figure}

% -------------------------------
\begin{table}[!htb]
\caption{Parameter values for model \eqref{model:PK/PD}
according with Schnider models \cite{Schnider}.}
\label{Table:Schnider}
\centering
\begin{tabular}{|c|c|} \hline
{\bf Parameter} & {\bf Estimation} \\ \hline
$a_{10}\,(min^{-1})$ 
& $0.443+0.0107\, (\text{weight}-77)-0.0159\, 
(\text{LBM}-59)+0.0062\, (\text{height}-177)$\\ \hline
$a_{12}\,(min^{-1})$ 
& $0.302-0.0056 \, (\text{age}-53)$\\ \hline
$a_{13}\,(min^{-1})$ 
& 0.196 \\ \hline
$a_{21}\,(min^{-1})$ 
& $\left( 1.29-0.024\, (\text{age}-53)\right) 
/\left( 18.9-0.391\, (\text{age}-53)\right) $ \\ \hline
$a_{31}\,(min^{-1})$ & 0.0035 \\ \hline
$a_{e0}\,(min^{-1})$ & 0.456 \\ \hline
$v_1\,(l)$ & 4.27 \\ \hline
\end{tabular}
\end{table}

Following Schnider et al. \cite{Schnider}, 
the lean body mass (LBM) is calculated using the James
formula, which performs satisfactorily in normal and
moderately obese patients, but not so well for
severely obese cases \cite{James}. 
The James formula calculates LBM as follows:
\begin{eqnarray}
\text{for Male},\, 
\text{LBM}&=&1.1\times \text{weight}
-128\times\left( \dfrac{\text{weight}}{\text{height}}\right)^2,\\	
\text{for Female},\, \text{LBM}&=&1.07\times \text{weight}
-148\times\left( \dfrac{\text{weight}}{\text{height}}\right)^2.
\end{eqnarray}

% --------------------------------------------------------

\subsection{The bispectral index (BIS)}
\label{subsec:2.3}

The bispectral index (BIS) serves as an indicator of anesthesia depth, obtained by analyzing 
the electroencephalogram (EEG) signal and reflecting the effect site concentration of $y_4(t)$. 
It provides a quantitative measure of a patient's level of consciousness, ranging from 0 
(indicating no cerebral activity) to 100 (representing a fully awake patient). According to 
clinical guidelines, maintaining BIS values within the range of 40 to 60 is considered essential 
to ensure adequate general anesthesia during surgical procedures \cite{Avidan}. BIS values below 
40 indicate a deep hypnotic state, while values above 60 may increase the risk of awareness 
under anesthesia. Thus, it is crucial to closely monitor and regulate the BIS value within the optimal 
range of 40 to 60 to prevent unintended consciousness during anesthesia 
and ensure patient safety \cite{Evers}.

Empirically, the BIS can be described by a decreasing sigmoid function, 
as outlined by Bailey et al. \cite{Bailey}:
\begin{equation}
\label{BIS}
BIS(y_4(t))
=BIS_0\left(1-\dfrac{y_4(t)^\gamma}{y_4(t)^\gamma+EC_{50}^\gamma} \right), 
\end{equation}
where the parameters in the BIS model have specific interpretations. 
Function $BIS_0$ gives the BIS value of an awake patient that is typically set to 100; 
$EC_{50}$ corresponds to the drug concentration at which $50\%$ of the maximum effect 
is achieved, while $\gamma$ is a parameter that captures the degree of nonlinearity 
in the model. According to Haddad et al. \cite{Haddad}, typical values for these parameters 
are $EC_{50} = 3.4$ $mg/l$, and $\gamma = 3$.

% -----------------

\subsection{The equilibrium point}
\label{subsec:2.4}

The equilibrium points are computed by putting the right-hand side of the four equations
given in \eqref{model:PK/PD} equal to zero with the condition
\begin{equation}
y_4=EC_{50}.
\end{equation}
It results that the equilibrium point 
$y_e=\left(y_{e\,1},y_{e \, 2},y_{e\,  3},y_{e\,  4} \right)$ is given by
\begin{equation}
y_{e\,  1  }=v_1\,EC_{50}, \quad
y_{e\,  2}= \frac{a_{1\,2}\,v_1\,EC_{50}}{a_{2\,1}}, \quad
y_{e\,  3}=\frac{a_{1\,3}\,v_1\,EC_{50}}{a_{3\,1}}, \quad
y_{e\,  4}=EC_{50},	
\end{equation}
and the value of the continuous infusion rate for this equilibrium is
\begin{equation}
u_{e}=a_{1\,0}\,v_1 \,EC_{50}. 
\end{equation}

The fast state is defined by
\begin{equation}
\label{fast:final:state}
y_{eF}(t)=(y_1(t),y_4(t)).
\end{equation}

For more information on the classical PK/PD model we refer the interested
reader to \cite{Zabi}.

% --------------------------------------------------------

\section{Main Results}
\label{sec:MR}

We begin by using the Picard iterative process to prove a series 
solution to a linear nonhomogeneous $\psi$-Caputo fractional system:
see Theorem~\ref{thm:MR}, in Section~\ref{Solution:nonhomegenous:fractional:systems}. 
Then, we generalize the state-of-the-art PK/PD model \eqref{model:PK/PD} 
by introducing in Section~\ref{fractional:PK/PD:model} a more general $\psi$-Caputo 
fractional PK/PD model that is covered by our Theorem~\ref{thm:MR}.
We finish our new results in Section~\ref{Numerical:example}, by investigating 
numerically the new fractional model and comparing the efficacy of function $\psi$.

% -------------------------

\subsection{Solution of linear nonhomogeneous $\psi$-Caputo fractional systems}
\label{Solution:nonhomegenous:fractional:systems}

Consider the following linear nonhomogeneous fractional equation:
\begin{equation}
\label{Maineq}
{}^C \!D_{a}^{\alpha,\psi}y(t)=Ay(t)+u(t), \quad t>a,
\end{equation}
subject to the initial condition
\begin{equation}
\label{Condintial}
y(a)=y_{0},
\end{equation}  
where ${}^C \!D_{a}^{\alpha,\psi}$ is the $\psi$-Caputo fractional 
derivative of order $\alpha \in (0,1]$, such that 
$$
{}^C \!D_{a}^{\alpha,\psi}y(t)=\left[{}^C \!D_{a}^{\alpha,\psi}y_{1}(t),
{}^C \!D_{a}^{\alpha,\psi}y_{2}(t),\ldots,{}^C \!D_{a}^{\alpha,\psi}y_{n}(t)\right]^{T},
$$ 
$A$ is a $n\times n$  matrix, $u(t)=\left[u_{1}(t),u_{2}(t),\ldots,u_{n}(t)\right]^{T}$ 
is a piecewise continuous integrable function on $[a,+\infty)$, and the initial condition is
$y(a)=\left[y_{1}(a),y_{2}(a),\ldots,y_{n}(a)\right]^{T}$.

\begin{lemma}
\label{lemma3}
Let $p\in \mathbb{N}$, $\alpha \in(0,1]$, and $f$ be a piecewise continuous 
function of exponential order at
any interval $[a, b]$. Then, 
$$
I_{a}^{(p+1)\alpha,\psi}f(t)=\dfrac{\left(\psi(t)
-\psi(a)\right)^{p\alpha+\alpha-1}}{\Gamma(p\alpha+\alpha)}\ast_{\psi}f(t).
$$
\end{lemma}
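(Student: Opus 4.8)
The plan is to prove the identity by directly unwinding the two definitions involved and reconciling them with a single change of variables. Write $\beta = (p+1)\alpha = p\alpha + \alpha$, so that the convolution kernel on the right-hand side is $g(t) = (\psi(t)-\psi(a))^{\beta-1}/\Gamma(\beta)$ and the target on the left-hand side is the $\psi$-Riemann--Liouville integral $I_a^{\beta,\psi}f(t)$ from Definition~\ref{DefInt}. The hypotheses that $f$ is piecewise continuous and of exponential order guarantee that the generalized convolution of Definition~\ref{Defa} is well defined on $[a,b]$, so all the integrals below converge and the manipulations are legitimate.

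First I would expand $\left(g \ast_\psi f\right)(t)$ using Definition~\ref{Defa}:
$$
\left(g \ast_\psi f\right)(t) = \int_a^t g(s)\, f\!\left(\psi^{-1}(\psi(t)+\psi(a)-\psi(s))\right)\psi'(s)\,\mathrm{d}s,
$$
where $g(s) = (\psi(s)-\psi(a))^{\beta-1}/\Gamma(\beta)$. In this form the dilated argument sits on $f$ rather than on the power kernel, so the integrand does not yet resemble the $\psi$-Riemann--Liouville integrand. The key step is therefore the substitution $r = \psi^{-1}(\psi(t)+\psi(a)-\psi(s))$, equivalently $\psi(s) = \psi(t)+\psi(a)-\psi(r)$. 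Because $\psi' > 0$ on $[a,b]$, this map is a smooth bijection; it sends $s=a$ to $r=t$ and $s=t$ to $r=a$, reverses orientation with $\psi'(s)\,\mathrm{d}s = -\psi'(r)\,\mathrm{d}r$, and --- crucially --- yields $\psi(s)-\psi(a) = \psi(t)-\psi(r)$, which collapses the power kernel to $(\psi(t)-\psi(r))^{\beta-1}/\Gamma(\beta)$, while $f$ becomes simply $f(r)$.

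Carrying out this substitution turns the right-hand side into
$$
\frac{1}{\Gamma(\beta)}\int_a^t \psi'(r)\left(\psi(t)-\psi(r)\right)^{\beta-1} f(r)\,\mathrm{d}r,
$$
which is exactly $I_a^{\beta,\psi}f(t) = I_a^{(p+1)\alpha,\psi}f(t)$ by Definition~\ref{DefInt}, completing the argument. An equivalent route is to first record that the generalized convolution is commutative --- the same change of variables shows $\left(g \ast_\psi f\right)(t) = \left(f \ast_\psi g\right)(t)$ --- and then evaluate the kernel $g$ at the dilated point $\psi^{-1}(\psi(t)+\psi(a)-\psi(s))$, where again the shift identity produces the $\psi$-Riemann--Liouville kernel directly. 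The only real obstacle is bookkeeping the dilation $\psi^{-1}(\psi(t)+\psi(a)-\psi(s))$ correctly: one must verify the limit swap, the sign from $\psi'(s)\,\mathrm{d}s = -\psi'(r)\,\mathrm{d}r$, and the identity $\psi(s)-\psi(a) = \psi(t)-\psi(r)$. Once these are in hand, everything reduces to matching the prefactor $1/\Gamma(p\alpha+\alpha)$ and the exponent $p\alpha+\alpha-1 = \beta-1$ against the definition of the integral.
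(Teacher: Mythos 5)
Your proof is correct and follows essentially the same route as the paper's: the paper's proof is exactly the change of variable $z=\psi^{-1}(\psi(t)+\psi(a)-\psi(s))$ applied to Definition~\ref{Defa}, which you carry out explicitly and with the correct bookkeeping of limits, sign, and the identity $\psi(s)-\psi(a)=\psi(t)-\psi(r)$. No gaps.
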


\begin{proof}
Follows by using the change of variable $z=\psi^{-1}\left(\psi(t)+\psi(a)-\psi(s)\right)$, 
Definition~\ref{Defa}, and performing direct calculations.
\end{proof}

\begin{lemma}
\label{lemma2} 
Let $\alpha \in(0,1]$ and $C$ be a constant. Then, one has	
$$
I_{a}^{\alpha,\psi}C=\dfrac{C}{\Gamma(\alpha+1)}\left(\psi(t)-\psi(a)\right)^{\alpha}.  
$$
\end{lemma}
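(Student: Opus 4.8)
The plan is to compute $I_a^{\alpha,\psi}C$ directly from Definition~\ref{DefInt}. First I would substitute the constant function $f\equiv C$ into the definition and pull $C$ outside by linearity of the integral, reducing the task to evaluating the single integral $\int_a^t \psi'(s)\left(\psi(t)-\psi(s)\right)^{\alpha-1}\,\mathrm{d}s$.

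The key step is the change of variable $w=\psi(t)-\psi(s)$, so that $\mathrm{d}w=-\psi'(s)\,\mathrm{d}s$; since $\psi'>0$ on $[a,b]$ this is a legitimate monotone substitution, and the limits transform from $s=a,t$ to $w=\psi(t)-\psi(a),0$. The integral then becomes $\int_0^{\psi(t)-\psi(a)} w^{\alpha-1}\,\mathrm{d}w$, which for $\alpha\in(0,1]$ is convergent (the singularity at $w=0$ is integrable because $\alpha-1>-1$) and equals $\left(\psi(t)-\psi(a)\right)^{\alpha}/\alpha$.

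Combining these pieces gives $I_a^{\alpha,\psi}C=\frac{C}{\alpha\,\Gamma(\alpha)}\left(\psi(t)-\psi(a)\right)^{\alpha}$, and I would finish by invoking the functional equation $\Gamma(\alpha+1)=\alpha\,\Gamma(\alpha)$ to rewrite $\alpha\,\Gamma(\alpha)$ as $\Gamma(\alpha+1)$, yielding the claimed formula. Alternatively, the identity drops out of Lemma~\ref{lemm1} by taking $\beta=1$, so that $\left(\psi(x)-\psi(a)\right)^{\beta-1}\equiv 1$, and then multiplying through by $C$. There is no genuine obstacle here: the only point meriting a word of justification is the integrability of the kernel near $s=t$ when $\alpha<1$, which the bound $\alpha-1>-1$ settles, and the endpoint case $\alpha=1$ is consistent since $\Gamma(2)=1$.
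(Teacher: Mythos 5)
Your proposal is correct and matches the paper's own proof in essence: both evaluate the integral $\int_a^t \psi'(s)\left(\psi(t)-\psi(s)\right)^{\alpha-1}\,\mathrm{d}s$ directly from Definition~\ref{DefInt} (your substitution $w=\psi(t)-\psi(s)$ is just the explicit form of the antiderivative the paper writes down) and then use $\Gamma(\alpha+1)=\alpha\,\Gamma(\alpha)$. The alternative via Lemma~\ref{lemm1} with $\beta=1$ that you mention is also valid but not the route the paper takes.
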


\begin{proof}
From Definition~\ref{DefInt}, we have 
\begin{equation*}
\begin{split}
I_{a}^{\alpha,\psi}C
&=\dfrac{C}{\Gamma(\alpha)}
\int_{a}^{t}\psi'(s)\left(\psi(t)-\psi(s)\right)^{\alpha-1}\, \mathrm{d}s\\
&=\dfrac{C}{\Gamma(\alpha)} [\alpha^{-1}\left(\psi(t)-\psi(s)\right)]^{t}_{a}\\
&=\dfrac{C}{\Gamma(\alpha+1)}\left(\psi(t)-\psi(a)\right)^{\alpha},
\end{split}	
\end{equation*}	
and the proof is complete.
\end{proof}

Now, we shall utilize the Picard iterative process \cite{Duan} to formulate a series 
solution to \eqref{Maineq}--\eqref{Condintial}.

\begin{theorem}
\label{thm:MR}	
The solution of the initial value problem~\eqref{Maineq}--\eqref{Condintial} 
can be given in series form as
\begin{equation}
\label{Sol1}
y(t)=\sum_{l=0}^{\infty}\frac{A^{l}\left(\psi(t)-\psi(a)\right)^{l\alpha}}{\Gamma(l\alpha+1)}y(a)
+\sum_{l=0}^{\infty}\frac{A^{l}\left(\psi(t)-\psi(a)\right)^{l\alpha
+\alpha-1}}{\Gamma(l\alpha+\alpha)}\ast_{\psi}u(t).
\end{equation}
\end{theorem}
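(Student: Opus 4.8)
The plan is to recast the fractional initial value problem as an equivalent fractional integral equation and then run the Picard scheme on it, tracking the contribution of the initial data and that of the forcing term separately. First I would apply the operator $I_{a}^{\alpha,\psi}$ to both sides of \eqref{Maineq} and invoke Theorem~\ref{Composition} componentwise (both ${}^C\!D_{a}^{\alpha,\psi}$ and $I_{a}^{\alpha,\psi}$ act entrywise on the vector $y$, and the constant matrix $A$ commutes out of the integral). Since $I_{a}^{\alpha,\psi}\,{}^C\!D_{a}^{\alpha,\psi}y(t)=y(t)-y(a)$, the linearity of $I_{a}^{\alpha,\psi}$ turns \eqref{Maineq}--\eqref{Condintial} into the fixed-point form
\[
y(t)=y(a)+A\,I_{a}^{\alpha,\psi}y(t)+I_{a}^{\alpha,\psi}u(t).
\]

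Next I would define the Picard iterates by $y_{0}(t)=y(a)$ and $y_{k+1}(t)=y(a)+A\,I_{a}^{\alpha,\psi}y_{k}(t)+I_{a}^{\alpha,\psi}u(t)$, compute the first few iterates to expose the pattern, and then fix it by induction on $k$. For the part carried by the initial data, applying $A\,I_{a}^{\alpha,\psi}$ to a monomial $\left(\psi(t)-\psi(a)\right)^{l\alpha}$ raises its order to $(l+1)\alpha$ with the Gamma prefactor prescribed by Lemma~\ref{lemm1} (the base case $l=0$ being Lemma~\ref{lemma2}), so the initial-data terms accumulate to the partial sums $\sum_{l=0}^{k}\frac{A^{l}\left(\psi(t)-\psi(a)\right)^{l\alpha}}{\Gamma(l\alpha+1)}y(a)$. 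For the forcing term, each extra application of $I_{a}^{\alpha,\psi}$ raises the integration order additively, so by the semigroup property of the $\psi$-Riemann--Liouville integral the $l$-th level produces $A^{l}I_{a}^{(l+1)\alpha,\psi}u(t)$, which Lemma~\ref{lemma3} rewrites in closed form as $\frac{A^{l}\left(\psi(t)-\psi(a)\right)^{l\alpha+\alpha-1}}{\Gamma(l\alpha+\alpha)}\ast_{\psi}u(t)$. Thus $y_{k}$ is exactly the $k$-th partial sum of \eqref{Sol1}.

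Finally I would justify convergence and identify the limit as the solution. Sending $k\to\infty$, the initial-data series converges to the matrix Mittag--Leffler expression $E_{\alpha}\!\left(A\left(\psi(t)-\psi(a)\right)^{\alpha}\right)y(a)$ in the sense of \eqref{eq1}, whose entire character forces absolute and locally uniform convergence in $t$; the forcing series is bounded analogously, using the piecewise continuity and exponential order of $u$ to control the generalized convolutions of Definition~\ref{Defa}. The main obstacle is precisely this analytic step: one must establish uniform convergence of the iterates on compact $t$-intervals and justify interchanging the limit with $I_{a}^{\alpha,\psi}$, so that the limit $y(t)$ satisfies the fixed-point equation above and hence, by Theorem~\ref{Composition}, the original problem \eqref{Maineq}--\eqref{Condintial}; by comparison, the emergence of the pattern in the second paragraph is purely mechanical bookkeeping with Lemmas~\ref{lemm1}--\ref{lemma3}.
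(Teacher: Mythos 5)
Your proposal follows essentially the same route as the paper: apply $I_{a}^{\alpha,\psi}$ and Theorem~\ref{Composition} to obtain the fixed-point equation, run the Picard iteration with the same recurrence, identify the partial sums via Lemma~\ref{lemma2} (and the monomial integration rule of Lemma~\ref{lemm1}) for the initial-data part and Lemma~\ref{lemma3} for the forcing part, and pass to the limit. Your extra attention to uniform convergence of the iterates and the interchange of limit and integral is a point the paper's proof leaves implicit, but it does not change the argument.
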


\begin{proof}
Applying the fractional integration operator $I_{a}^{\alpha,\psi}$ to both sides of 
equation~\eqref{Maineq}, and using Theorem~\ref{Composition}, we obtain the following expression:
$$
y(t)=y(a)+AI_{a}^{\alpha,\psi}y(t)+I_{a}^{\alpha,\psi}u(t).
$$
Let $\phi_{k}$ be the $k$th approximate solution 
with the initial one given by
$$
\phi_{0}(a)=y(a)
$$ 
and, for $k\geq1$, the recurrent formula 
\begin{equation}
\label{Recurrent}
\phi_{k}(t)=y(a)+AI_{a}^{\alpha,\psi}\phi_{k-1}(t)+I_{a}^{\alpha,\psi}u(t)
\end{equation}
being satisfied. From formula~\eqref{Recurrent} and Lemma~\ref{lemma2}, one has 
\begin{equation*}
\begin{split}
\phi_{1}(t)&=y(a)+\dfrac{A\left(\psi(t)-\psi(a)\right)^{\alpha}}{\Gamma(\alpha+1)}y(a)
+I_{a}^{\alpha,\psi}u(t),\\
\phi_{2}(t)&=y(a)+\dfrac{A\left(\psi(t)-\psi(a)\right)^{\alpha}}{\Gamma(\alpha+1)}y(a)
+\dfrac{A^{2}\left(\psi(t)-\psi(a)\right)^{2\alpha}}{\Gamma(2\alpha+1)}y(a)
+AI_{a}^{2(\alpha,\psi)}u(t)+I_{a}^{\alpha,\psi}u(t),\\ \vdots\\
\phi_{k}(t)&= \sum_{l=0}^{k}\frac{A^{l}\left(\psi(t)-\psi(a)\right)^{l\alpha}}{\Gamma(l\alpha +1)}y(a)
+\sum_{l=0}^{k-1}A^{l}I_{a}^{(l+1)(\alpha,\psi)}u(t).
\end{split}	
\end{equation*} 
By virtue of Lemma~\ref{lemma3} and by taking the limit $k\longrightarrow \infty$ for $\phi_{k}(\cdot)$, 
we obtain the series formula \eqref{Sol1} for the solution 
of~\eqref{Maineq}--\eqref{Condintial}.
\end{proof}

Note that, in terms of the matrix Mittag--Leffler functions \eqref{eq1} and \eqref{eq2}, 
the solution~\eqref{Sol1} may be written as
\begin{equation}
\label{Sol2}
y(t)=E_{\alpha}\left(A(\psi(t)-\psi(a))^{\alpha}
\right)y(a)+(\psi(t)-\psi(a))^{\alpha-1}
E_{\alpha,\alpha}\left(A(\psi(t)-\psi(a))^{\alpha}\right)\ast_{\psi}u(t).
\end{equation}

% ------------------------------------------------------------------

\subsection{A fractional PK/PD model}
\label{fractional:PK/PD:model}

Motivated by system \eqref{model:PK/PD}, we introduce here our $\psi$-Caputo fractional 
Pharmacokinetic/Pharmacodynamic model, which is obtained by replacing each ordinary 
derivative in the system~\eqref{model:PK/PD} 
by the $\psi$-Caputo fractional derivative of order $\alpha \in (0,1]$. 
Then, our proposed PK/PD model can be expressed by the following 
four-dimensional fractional dynamical system: 
\begin{equation} 
\label{model2} 
\left\{
\begin{array}{l l}
{}^C \!D_{0}^{\alpha,\psi}y_1(t)= -(a_{1\,0}+a_{1\,2}+a_{1\,3})\,y_1(t)
+a_{2\,1}\,y_2(t)+ a_{3\,1}\,y_3(t)+u_{1}(t),\\		
{}^C \!D_{0}^{\alpha,\psi}y_2(t)= a_{1\,2}\,y_1(t) -a_{2\,1} \, y_2(t),\\	
{}^C \!D_{0}^{\alpha,\psi}y_3(t)= a_{1\,3}\,	y_1(t)-a_{3\,1}\,y_3(t),\\		
{}^C \!D_{0}^{\alpha,\psi}y_4(t)= \frac{a_{e\,0}}{v_1}\,y_1(t) -a_{e\,0}\, y_4(t),
\end{array}\right.
\end{equation}
subject to the initial conditions
\begin{equation}
\label{initial2}
y_1(0)=y_2(0)=y_3(0)=y_4(0)= 0.
\end{equation}
According to the dynamical system \eqref{Maineq}, one may write system 
\eqref{model2}--\eqref{initial2} in a matrix form as follows:
\begin{equation}
\label{Model2Matrix}
{}^C \!D_{0}^{\alpha,\psi}y(t)=A\,y(t)+B\,u_1(t)
\end{equation}
with $y(t)=\left[y_1(t),y_2(t),y_3(t),y_4(t)\right]^{T}
\in \mathbb{R}^4$, $y(0)=\left[0,0,0,0\right]^{T}$, 
$$
A=\left(
\begin{array}{cccc}
-(a_{1\,0}+a_{1\,2}+a_{1\,3}) & a_{2\,1} & a_{3\,1} & 0 \\
a_{1\,2} & -a_{2\,1} & 0 & 0 \\
a_{1\,3} & 0 & -a_{3\,1} & 0 \\
\frac{a_{e\,0}}{v_1} & 0 & 0 & -a_{e\,0} 
\end{array}
\right)
\quad \text{ and } \quad
B=\left(
\begin{array}{c}
1 \\
0  \\
0  \\
0 
\end{array}
\right).
$$	
One mentions that the continuous infusion rate $u_{1}(t)$ is to be chosen 
in such a way to transfer the system \eqref{model2} from the initial state 
(wake state) to the fast final state (anesthetized state).

\begin{remark}
If $\psi(t)=t$ and $\alpha=1$, then the fractional system \eqref{model2} 
reduces to the classical PK/PD model \eqref{model:PK/PD} \cite{ZST}. 
\end{remark}

% ---------------------------------------------

\subsection{Numerical simulations}
\label{Numerical:example}

To administer anesthesia to a 53-year-old man weighing 77~Kg and measuring 177~cm, 
we utilize our proposed fractional PK/PD system described by:
\begin{equation}
\label{Example1:Time:Optimal}
\begin{cases}
{}^C \!D_{0}^{\alpha,\psi}y(t)=A\,y(t)+B\,u_1(t),\\
 y(0)=(0,\,0,\,0,\,0)^{T},
\end{cases}
\end{equation}
where, according with Table~\ref{Table:Schnider} and \cite{Said}, 
the matrix $A$ is taken as
\begin{equation}
A=\left(
\begin{array}{cccc}
-0.9175  &  0.0683 &   0.0035&         0\\
0.3020 &  -0.0683  &       0    &     0\\
0.1960  &       0 &  -0.0035     &    0\\
0.1068   &      0&         0  & -0.4560
\end{array}
\right)
\quad \text{ and }\quad 
B=\left(
\begin{array}{c}
1 \\
0  \\
0  \\
0 
\end{array}
\right),
\end{equation}	
with 
\begin{equation}
\label{Optimal:control:u:3}
u_{1}(t)=\begin{cases}
106.0907\, mg/min& \hbox{if}\,\, 0\leq t<0.5467, \\
0 & \hbox{if}\,\, 0.5467<t\leq 1.8397 .
\end{cases}	
\end{equation}

From Theorem~\ref{thm:MR} of Section~\ref{Solution:nonhomegenous:fractional:systems},
written in form \eqref{Sol2}, the solution of system \eqref{Example1:Time:Optimal} is given by 
\begin{equation}
\label{Sol3}
y(t)=E_{\alpha}\left(A(\psi(t)-\psi(0))^{\alpha}
\right)y(0)+(\psi(t)-\psi(0))^{\alpha-1}E_{\alpha,\alpha}\left(A(\psi(t)-\psi(0))^{\alpha}\right)
\ast_{\psi}u(t)
\end{equation}
with $u(t)= B u_1(t) = \left[u_{1}(t),0,0,0\right]^{T}$. 

Figure~\ref{Solution} showcases the solutions derived from the fractional PK/PD model 
\eqref{Example1:Time:Optimal}, considering the function $\psi(t)=t$ and exploring different 
fractional order values: $\alpha=1$, $\alpha=0.95$, $\alpha=0.9$, and $\alpha=0.85$. 
In Figure~\ref{BISI}, the curves represent the controlled BIS (Bispectral Index) associated 
with the optimal continuous infusion rate of the administered anesthetic $u(t)$. 
It is noteworthy that when the function $\psi(t)=t$ and the fractional order is set to $\alpha=1$, 
then the obtained results resemblance those derived from the classical PK/PD model \eqref{model:PK/PD}. 
However, altering the fractional orders introduces variations in the degree of anesthesia. 
The recorded values for all fractional orders fell within the range of 40 to 50 (corresponding 
to the classical model), thus ensuring the condition of anesthesia. Nevertheless, it is crucial 
to acknowledge that lower fractional order values entail a higher risk of awareness during anesthesia.
\begin{figure}[!htb]
\begin{center}
\includegraphics[width=\textwidth,height=\textheight,keepaspectratio]{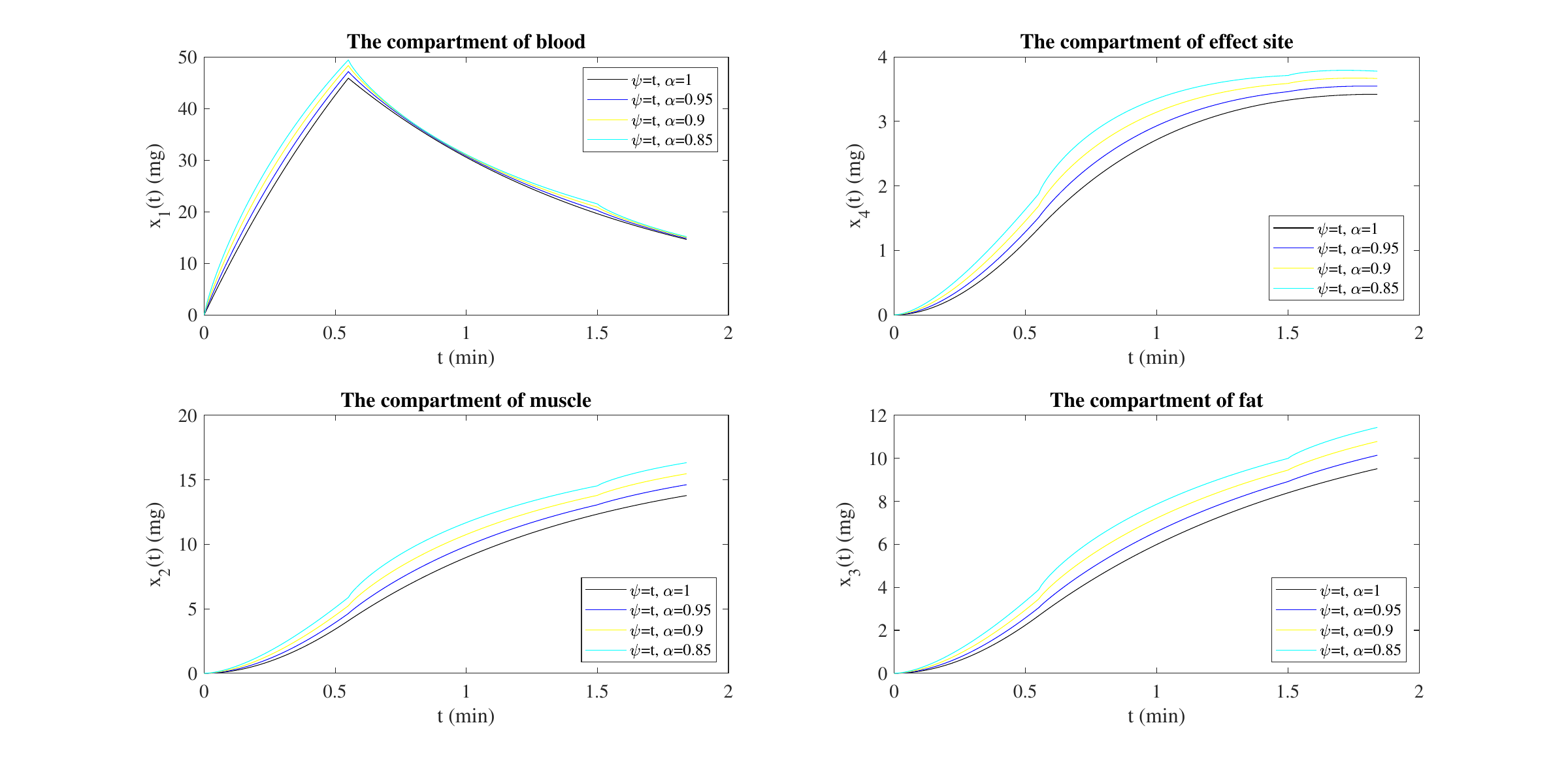}
\caption{Analysis of the fractional PK/PD model \eqref{Example1:Time:Optimal} 
with functions $\psi(t)=t$ for fractional orders $\alpha=1$, 
$\alpha=0.95$, $\alpha=0.9$  and $\alpha=0.85$.}
\label{Solution}
\end{center}
\end{figure}
\begin{figure}[!htb]
\begin{center}
\includegraphics[width=\textwidth,height=\textheight,keepaspectratio]{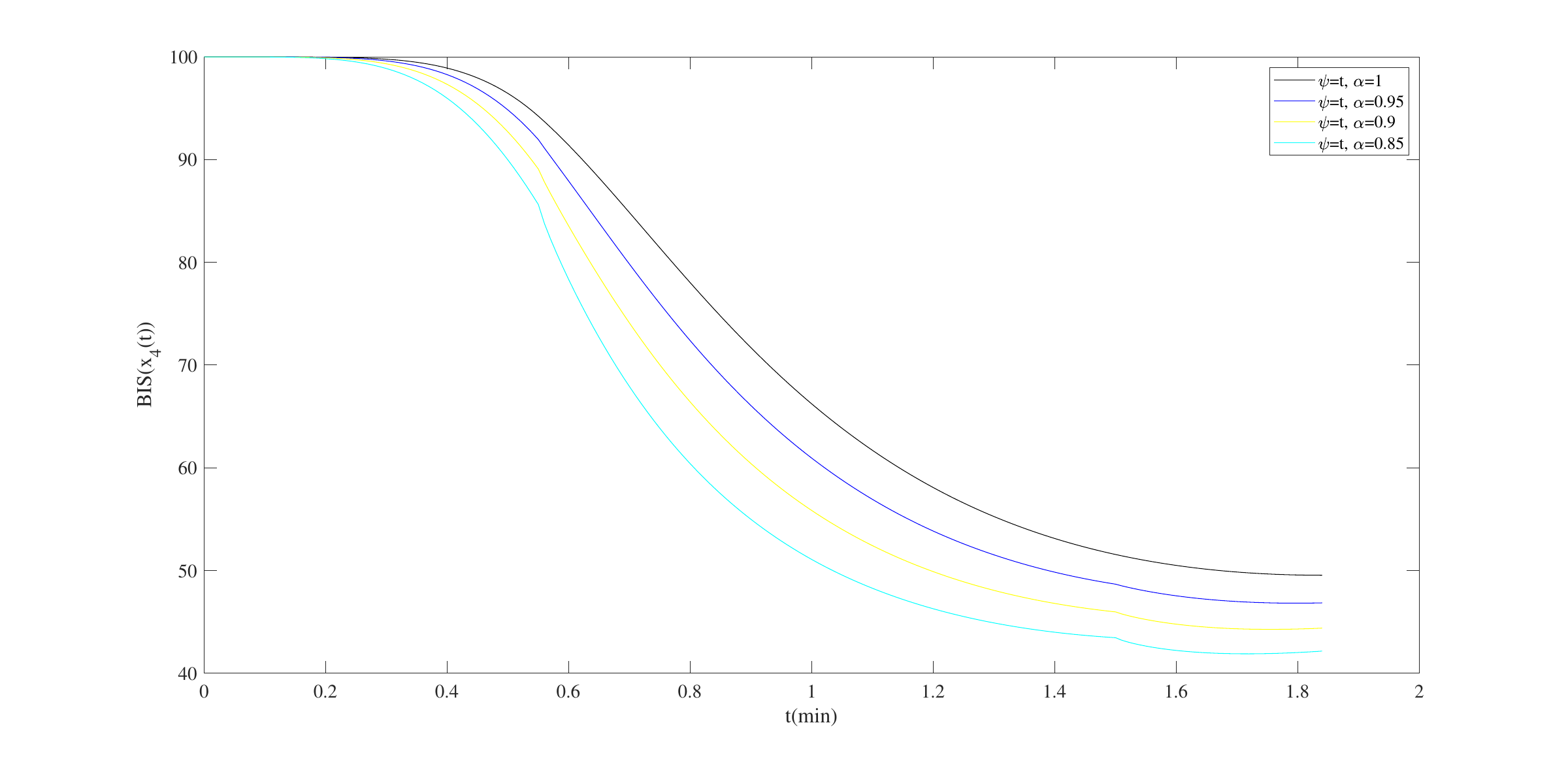}
\caption{Analysis of controlled BIS with functions $\psi(t)=t$ for fractional orders 
$\alpha=1$, $\alpha=0.95$, $\alpha=0.9$ and $\alpha=0.85$.} 
\label{BISI}
\end{center}
\end{figure}

Figure~\ref{Solution:alpha:1} illustrates the solutions of the fractional PK/PD model 
\eqref{Example1:Time:Optimal} associated with the functions $\psi(t)=t$, $\psi(t)=\sqrt{t}$, 
$\psi(t)=t^2$, and $\psi(t)=t+0.2$, when considering a fractional order of $\alpha=1$. 
The graphs shown in Figure~\ref{BISI:alpha:1} depict the controlled BIS corresponding 
to a specific value of the fractional order $\alpha=1$,  under functions  $\psi(t)=t$, 
$\psi(t)=\sqrt{t}$, $\psi(t)=t^2$ and $\psi(t)=t+0.2$.
It is observed that selecting functions $\psi(t)=\sqrt{t}$ and $\psi(t)=t^2$ does not yield 
satisfactory anesthesia results. On the other hand, employing the functions $\psi(t)=t$ 
and $\psi(t)=t+0.2$ leads to favorable anesthesia outcomes. In subsequent simulations, 
we will maintain the functions $\psi(t)=t$ and $\psi(t)=t+0.2$ while altering the fractional orders. 

\begin{figure}[!htb]
\begin{center}
\includegraphics[width=\textwidth,height=\textheight,keepaspectratio]{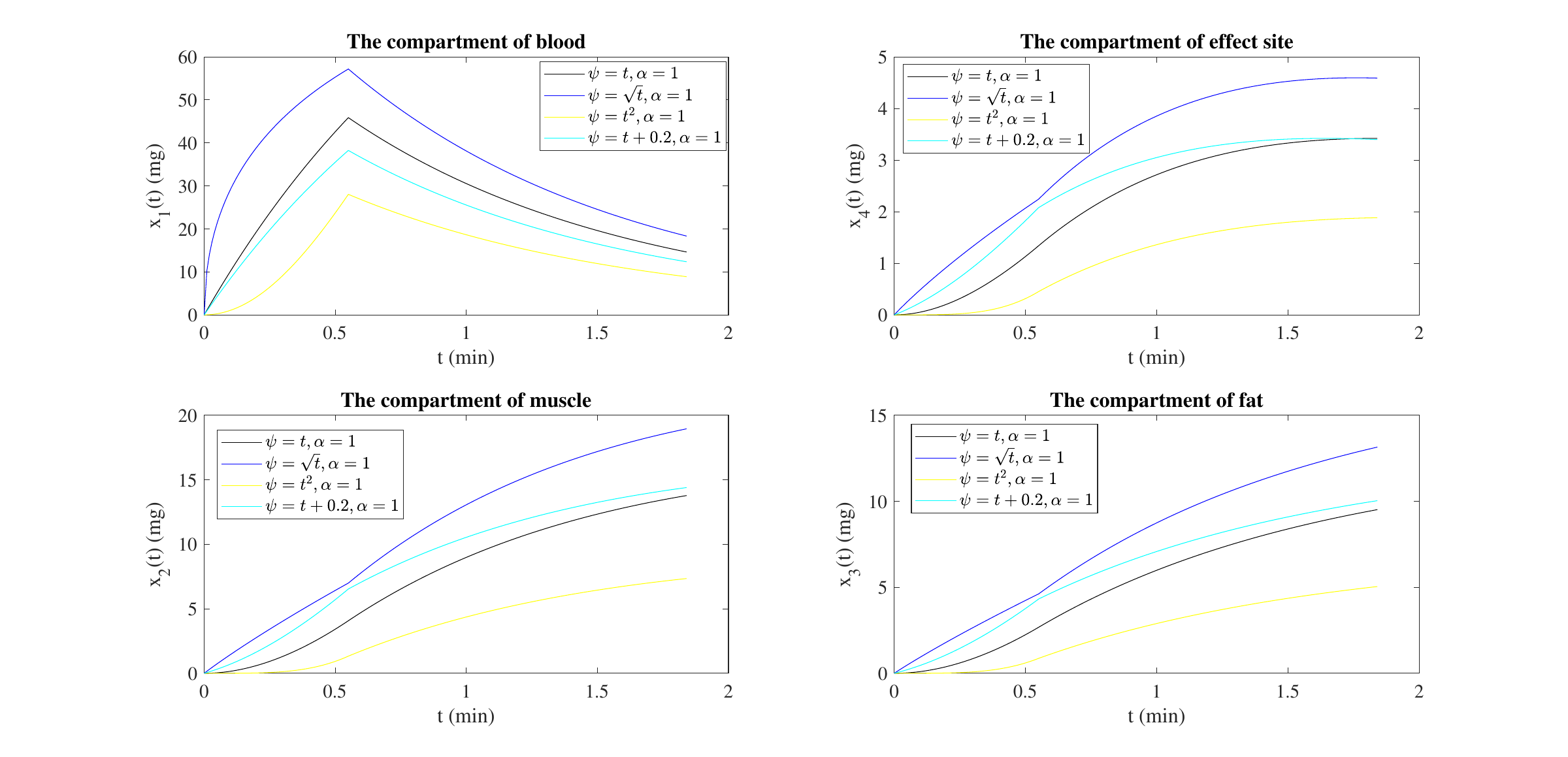}
\caption{Analysis of the fractional PK/PD model \eqref{Example1:Time:Optimal} with functions 
$\psi(t)=t$, $\psi(t)=\sqrt{t}$, $\psi(t)=t^2$ and $\psi(t)=t+0.2$ 
for fractional order $\alpha=1$.} 
\label{Solution:alpha:1}
\end{center}
\end{figure}
\begin{figure}[!htb]
\begin{center}
\includegraphics[width=\textwidth,height=\textheight,keepaspectratio]{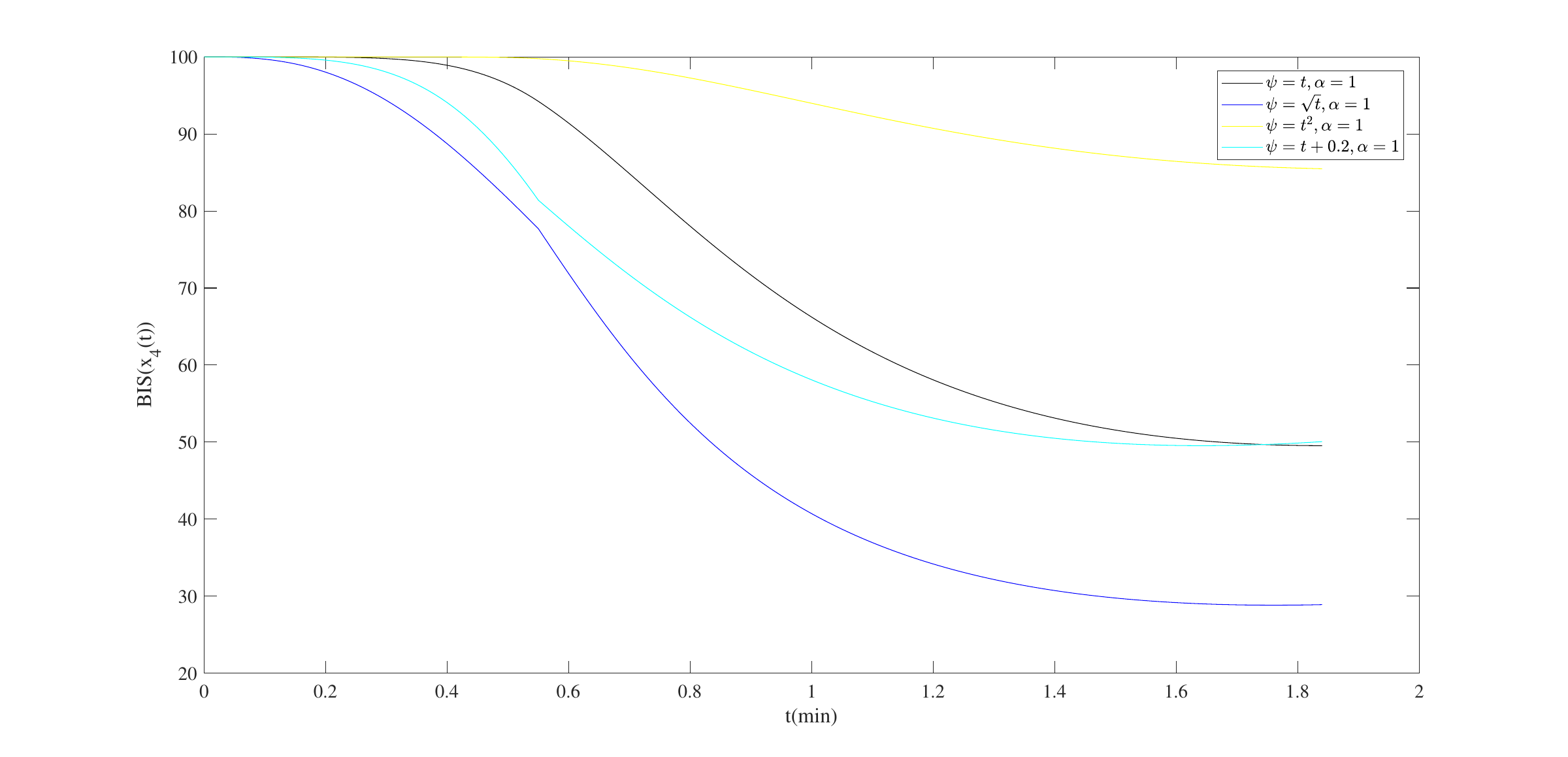}	
\caption{Analysis of controlled BIS with functions $\psi(t)=t$, $\psi(t)=\sqrt{t}$, $\psi(t)=t^2$ 
and $\psi(t)=t+0.2$ for fractional order $\alpha=1$.}
\label{BISI:alpha:1}
\end{center}
\end{figure}
  	 
In Figure~\ref{Solution:alpha:three}, we present the solutions of the fractional PK/PD model 
\eqref{Example1:Time:Optimal} corresponding to the functions $\psi(t)=t$ and $\psi(t)=t+0.2$, 
under the fractional orders $\alpha=1$, $\alpha=0.9$, and $\alpha=0.8$. The curves representing 
the controlled BIS are displayed in Figure~\ref{BISI:alpha:three}. It is worth noting that the 
recorded BIS values for all fractional orders ranged from 50 (resembling the classical model) 
to 60, thereby satisfying the condition of anesthesia. However, it is crucial to acknowledge that 
lower fractional order values, specifically with the function $\psi(t)=t+0.2$, result in a 
reduced risk of awareness during anesthesia.
\begin{figure}[!htb]
\begin{center}
\includegraphics[width=\textwidth,height=\textheight,keepaspectratio]{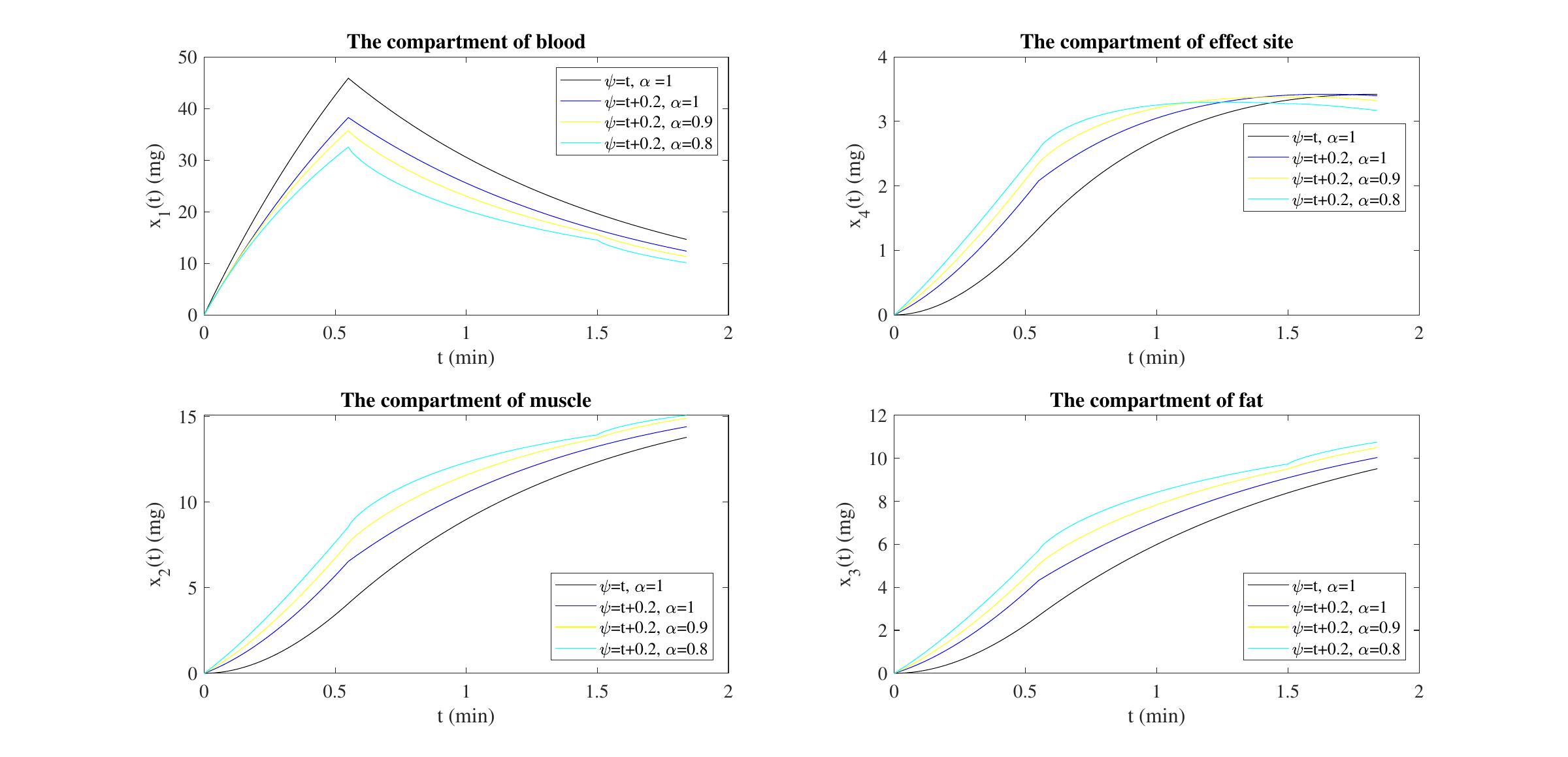}
\caption{Analysis of the fractional PK/PD model \eqref{Example1:Time:Optimal} with functions $\psi(t)=t$ 
and $\psi(t)=t+0.2$ for fractional orders $\alpha=1$, $\alpha=0.9$, and $\alpha=0.8$.} 
\label{Solution:alpha:three}
\end{center}
\end{figure}
\begin{figure}[!htb]
\begin{center}
\includegraphics[width=\textwidth,height=\textheight,keepaspectratio]{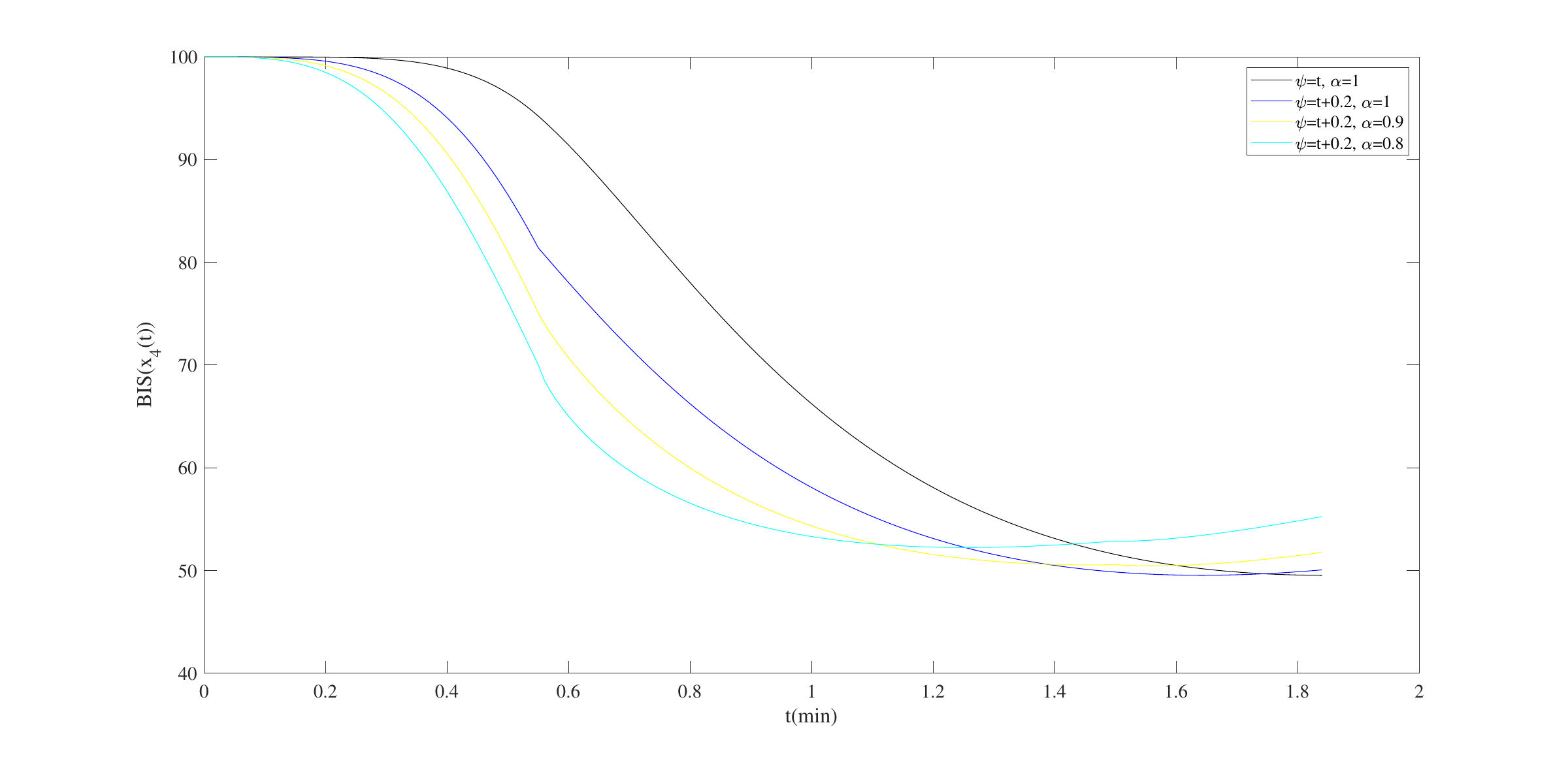}
\caption{Analysis of controlled BIS with functions $\psi(t)=t$ and $\psi(t)=t+0.2$ 
for fractional orders $\alpha=1$, $\alpha=0.9$, and $\alpha=0.8$.}
\label{BISI:alpha:three}
\end{center}
\end{figure}  	 

% --------------------------------------

\section{Discussion}
\label{sec:disc}

The $\psi$-Caputo fractional PK/PD model represents a notable advancement 
in modeling due to its ability to capture intricate drug response dynamics, 
considering the complex relationship between drug concentration and physiological effects. 
In comparison to conventional modeling approaches, the incorporation of the $\psi$-Caputo 
fractional derivative and the fractional order $\alpha$ enable a more comprehensive 
representation of non-local and memory-dependent effects, offering a more nuanced 
understanding of the induction phase of anesthesia. However,
it should be noted that the $\psi$-Caputo fractional PK/PD model 
is more complex compared to conventional modeling approaches \cite{Said,ZST}
because it relies on fractional calculus and an unknown $\psi$ function, 
introducing additional parameters, and requires specialized 
mathematical knowledge. Therefore, its use should be reserved 
for situations where traditional models are insufficient 
for capturing the intricacies of the system being modeled
with conventional models, based on integer-order calculus, 
remaining the standard for most PK/PD modeling tasks due to their 
ease of use and the availability of established methodologies 
and software \cite{Zabi}.

Complex systems often require sophisticated modeling techniques 
that consider intricate interactions and feedback mechanisms. Additionally, the availability 
of high-quality data, incorporating a wide range of variables, significantly enhances 
the reliability of model predictions. Given this, the accuracy and predictability of $\psi$-Caputo 
fractional PK/PD models are contingent upon the complexity of the research domain 
and the quality of available data, presenting some limitations:
(i) additional parameters; and (ii) data requirements.
Indeed, (i) the $\psi$-Caputo model introduces extra parameters associated 
with the fractional orders, which need to be estimated from data. These additional 
parameters can make the model more complex and require more data for accurate estimation.
(ii) Fractional PK/PD models may require more extensive data to accurately estimate 
parameters and capture the complexities of the system. Conventional models, 
in some cases, might work with fewer data points. Moreover, the $\psi$-Caputo 
fractional PK/PD model inherently carries uncertainties related to the selection of the 
function $\psi$ and the fractional order $\alpha$, which 
can impact the model's reliability in predicting anesthesia dosage. Uncertainties 
in this model can arise from various sources,  e.g.
(i) measurement uncertainty (errors in drug concentration measurements 
can propagate into the model, affecting the accuracy of estimated parameters and predictions),
(ii) inter-individual variability (patients can have varying physiological 
characteristics, which can lead to inter-individual variability in drug response, 
with the model not capturing all aspects of this variability, 
potentially resulting in suboptimal dosing for some patients),
(iii) intra-individual variability (even within the same individual, the response 
to anesthesia drugs can vary due to factors like changes in organ function, 
health status, or concurrent medications, and in that case the $\psi$-Caputo 
fractional model may not account for all these factors, leading to uncertainty 
in dosing recommendations for the same patient at different times),
(iv) parameter estimation uncertainty (the model relies on various parameters 
such as clearance, volume of distribution, and rate constants, and estimating 
these parameters from experimental data can introduce uncertainty),
(v) data uncertainty (the quality and quantity of data used to estimate model 
parameters can vary, and this can lead to uncertainties in the model; sparse 
or noisy data can result in less reliable model predictions, 
affecting the accuracy of anesthesia dosage recommendations),
(vi) model structure uncertainty (the $\psi$-Caputo fractional PK/PD model 
itself is a simplification of the complex processes occurring in the body,
not fully capturing all relevant mechanisms, leading to uncertainties in predictions).
Consequences of these uncertainties in the case of anesthesia (dosage) 
can be significant: underdosing, if the model underestimates the required dosage, 
in which case patients may not achieve the desired level of anesthesia, 
leading to inadequate pain control or awareness during surgery;
overdosing, if the model overestimates the required dosage, with patients 
receiving an excessive anesthesia, leading to complications 
such as extended recovery times, respiratory depression, or other side effects.

While the $\psi$-Caputo fractional PK/PD model shows advancements 
in capturing complex drug responses, there might be certain types of drugs 
or treatments for which this modeling approach may not be as suitable. 
Some reasons why fractional PK/PD modeling may not be ideal in certain cases
include: (i) non-linear pharmacokinetics; (ii) complex drug interactions; 
(iii) short-acting anesthetics.

Finally, it should be remarked that patient safety must be always of primary 
importance, especially concerning regulatory aspects like the Bispectral Index (BIS). 
Any modeling approach should promote patient safety by accurately predicting 
drug responses and aiding in the development of safer and more effective 
treatment protocols. Some key aspects on patient safety and the regulation 
of PKPD models, with a focus on BIS monitoring, are:
\begin{itemize}
\item Balancing Technology and Clinical Expertise. While PKPD models 
and BIS monitoring are valuable tools in anesthesia, they should 
not replace the judgment and expertise of skilled anesthesiologists. 
The safe administration of anesthesia requires a combination 
of technology and clinical acumen.	
\item Evidence-Based Practice. The development and use of PKPD models 
and monitoring devices like BIS should be based on rigorous scientific 
evidence and clinical studies. Regulatory bodies should require strong 
evidence of safety and efficacy before approving or endorsing these technologies.	
\item Regulatory Oversight. Regulatory agencies play a crucial role in ensuring 
that medical devices and models are safe and effective. They should establish 
and enforce guidelines for the development and use of PKPD models and monitoring 
devices, including BIS. Regular assessments and updates should be 
conducted to account for evolving scientific knowledge.
\item Continuous Monitoring. Real-time monitoring of patients during surgery 
is essential for patient safety. Monitoring parameters, including BIS values, 
should be continuously observed and interpreted by trained personnel 
to respond promptly to any adverse events or deviations.	
\item Individualized Care. Each patient is unique, and anesthesia management 
should be tailored to their specific needs and responses. PKPD models can provide 
guidance, but individualized care remains central to patient safety.
\end{itemize}

% --------------------------------------
 	 
\section{Conclusion}
\label{sec:conc}

The incorporation of the $\psi$-Caputo fractional derivative 
in Pharmacokinetics and Pharmacodynamics modeling represents 
a significant advancement in the field. Indeed, by utilizing 
fractional-order derivatives, researchers can more accurately 
capture the complex and non-local behavior observed 
in drugs within biological systems.

The choice of the function $\psi$ and the fractional order $\alpha$ holds critical 
importance in modeling the relationship between drug concentrations and pharmacological effects. 
This approach provides a more realistic representation of drug efficacy and dose-response 
relationships, allowing for a deeper understanding of the intricate dynamics 
involved in drug-target interactions.

While our study successfully demonstrates the potential of the $\psi$-Caputo 
fractional PK/PD model in capturing complex drug responses during the induction 
phase of anesthesia, it is crucial to acknowledge certain limitations. The current 
model's applicability may be constrained by the specific parameters chosen 
and the simplifications employed to facilitate numerical analysis. Furthermore, 
the availability of comprehensive clinical data for model validation remains 
a challenge, which could affect the generalizability of the findings. 
Future research efforts should focus on addressing these limitations 
to further enhance the applicability and robustness of the proposed model. 
Moreover, further research is necessary to explore the impact of the chosen function 
$\psi$ and the fractional order $\alpha$ on time-delayed responses. This area remains 
open for investigation, and future studies can delve into understanding how different 
choices of $\psi$ and $\alpha$ influence the temporal aspects of drug responses.

In summary, the incorporation of $\psi$-Caputo fractional derivatives in Pharmacokinetics 
and Pharmacodynamics modeling offers valuable insights and advancements. By refining the 
choice of function $\psi$ and fractional order $\alpha$, researchers can enhance the accuracy 
and realism of drug modeling, paving the way for a more comprehensive understanding of drug 
behavior in biological systems. 

% --------------------------------------

\section*{Acknowledgments}

This research was developed within the project 
``Mathematical Modelling of Multi-scale Control Systems: 
applications to human diseases (CoSysM3)'', 2022.03091.PTDC, 
financially supported by national funds (OE), through FCT/MCTES.
The authors are also supported by FCT and CIDMA via projects 
UIDB/04106/2020 and UIDP/04106/2020. 
The authors are grateful to three reviewers
for several comments, suggestions and questions, 
that helped them to improve the initial submitted manuscript.

% --------------------------------------

% --------------------------------------


\begin{thebibliography}{xx}
	
\bibitem{Almeida}
R. Almeida,	
{A Caputo fractional derivative of a function with respect to another function},
Commun Nonlinear Sci Numer Simulat {\bf 44} (2017), 460--481.

\bibitem{Arfan}
M. Arfan et al.,
On fractional order model of tumor dynamics with drug interventions
under nonlocal fractional derivative,
Results in Physics  {\bf 21} (2021), no.~103783, 1--11.

\bibitem{Atici}
F. M. At\i c\i, M. At\i c\i, N. Nguyen, T. Zhoroev \ and\ G. Koch,
A study on discrete and discrete fractional pharmacokinetics-pharmacodynamics 
models for tumor growth and anti-cancer effects,
Comput. Math. Biophys. {\bf 7} (2019), no.~1, 10--24.

\bibitem{Avidan}
M. S. Avidan et al., 
Anesthesia awareness and the bispectral index,
N Engl J Med. {\bf 11} (2008), no.~358, 1097--1108.

\bibitem{Bailey}	
J. M. Bailey \ and\ W. M. Haddad, 
Drug dosing control in clinical pharmacology, 
IEEE Control Syst. Mag. {\bf 25} (2005), no.~2, 35--51.

\bibitem{Baleanu3}
D. Baleanu, M. Hasanabadi, A. M. Vaziri \ and\ A. Jajarmi,
A new intervention strategy for an HIV/AIDS transmission 
by a general fractional modelling and an optimal control approach,
Chaos, Solitons \& Fractals {\bf 167} (2023), Paper No.~113078, 18~pp.

\bibitem{Beck}
C. L. Beck, 
Modeling and control of Pharmacodynamics, 
European Journal of Control {\bf 24} (2015), 33--49.

\bibitem{Brunton}
L. L. Brunton, J. S. Lazo and K. L. Parker,
The pharmacological basis of therapeutics, 
11th ed., McGraw-Hill, New York, 2006.

\bibitem{copot}
D. Copot, R. Magin, R. De Keyser  \ and\ C. Ionescu,	 
Data-driven modelling of drug tissue
trapping using anomalous kinetics,
Chaos Solitons Fractals {\bf 102} (2017), 441--446.

\bibitem{delAmo}
E. M. del Amo, P. N. Bishop, P. Godia \ and\  L. Aarons,	
Towards a population pharmacokinetic/pharmacodynamic model of 
anti-VEGF therapy in patients with age-related macular degeneration,
European Journal of Pharmaceutics and Biopharmaceutics,
{\bf 188} (2023), 78--88.

\bibitem{Jusko}
A. Dokoumetzidis, A. Iliadis \ and \ P. Macheras,
Nonlinear Dynamics and Chaos Theory: 
Concepts and Applications Relevant to Pharmacodynamics, 
Pharmaceutical Research {\bf 18} (2001), 415--426. 

\bibitem{Duan}
J. Duan and L. Chen,
Solution of fractional differential equation systems
and computation of matrix Mittag–Leffler functions,
Symmetry {\bf 10} ( 2018), no.~10, Paper No.~503, 14 pp.

\bibitem{Eleveld}
D. J. Eleveld, P. Colin, A. R. Absalom \ and\ M. M. R. F. Struys,	 
Pharmacokinetic-pharmacodynamic model for propofol 
for broad application in anaesthesia and sedation,
British Journal of Anaesthesia {\bf 120} (2018), no.~5, 942--959.

\bibitem{Evers} 
A. S. Evers, M. Maze and E. D. Kharasch,
{\it Anesthetic pharmacology: basic principles and clinical practice}, 
2nd edition, Cambridge University Press, Cambridge, 2011.

\bibitem{Ferdi}
Y. Ferdi et al.,
Fractional order calculus-based filters for biomedical signal processing, 
1st Middle East Conference on Biomedical Engineering, IEEE, 2011. 

\bibitem{Gabrielsson}
J. Gabrielsson \ and \ D. Weiner,
{\it Pharmacokinetic and Pharmacodynamic Data Analysis: Concepts and Applications},
5th Edition. Stockholm: Swedish Pharmaceutical Press, 2016.

\bibitem{Mittaglefller}
R. Gorenflo, A. A. Kilbas, F. Mainardi
\ and\ S. V. Rogosin,
{\it Mittag--Leffler Functions, Related Topics and Applications}, 
Springer Monographs in Mathematics, 2014.

\bibitem{Haddad}
W. M. Haddad, V. Chellaboina\ and\ Q. Hui, 
{\it Nonnegative and compartmental dynamical systems}, 
Princeton Univ. Press, Princeton, NJ, 2010.

\bibitem{Hennion}
M. Hennion \ and\  E. Hanert,
How to avoid unbounded drug accumulation with
fractional pharmacokinetics,
J. Pharmacokin. Pharmacodyn. {\bf 40} (2013), no.~6, 691--700.

\bibitem{IonescuC}
C. Ionescu \ and\  D. Copot,
On the use of fractional order PK-PD models,
J. Phys.: Conf. Ser. {\bf 783} (2017), Paper No.~012050, 12~pp.

\bibitem{Ionescu}
C. Ionescu, A. Lopes, D. Copot, J. Machado \ and\ J. Bates,
The role of fractional calculus
in modeling biological phenomena: A review,
Commun Nonlinear Sci Numer Simul. {\bf 51} (2017), 141--159.

\bibitem{James}
W. P. T. James, 
Research on obesity: a report of the DHSS/MRC Group, 
Nutrition Bulletin {\bf 4} (1977), no.~3, 187--190.

\bibitem{JaradAbdel}
F. Jarad\ and\ T. Abdeljawad,
{Generalized fractional derivatives and Laplace transform},
Discrete and Continuous Dynamical Systems  {\bf 13} (2020), no.~3, 709--722.

\bibitem{MR3914412}
S. Joshi, E. Mittal \ and\ R. M. Pandey,
On Euler type integrals involving extended Mittag-Leffler functions, 
Bol. Soc. Parana. Mat.  {\bf 38} (2020), no.~2, 125--134.	

\bibitem{MR4560070}
Y. Karaca\ and\ D. Baleanu, 
Algorithmic complexity-based fractional-order derivatives in computational biology, 
in {\it Advances in mathematical modelling, applied analysis and computation}, 
55--89, Lect. Notes Netw. Syst., 415, Springer, Singapore, 2023. 

\bibitem{der2}
A. A. Kilbas, H. M . Srivastava \ and\ J. J. Trujillo,
{\it Theory and applications of fractional differential equations},
Elsevier Science Limited, 2006.

\bibitem{LiF}
F. Li,
Incorporating fractional operators into interaction dynamics of a chaotic biological model,
Results in Physics {\bf 54} (2023), Paper No.~107052, 12~pp.

\bibitem{Magin}
R. Magin,
Fractional Calculus in Bioengineering (Part 1),
Critical Reviews™ in Biomedical Engineering {\bf 32} (2004), 1--104.

\bibitem{Meibohm} 
B. Meibohm and H. Derendorf, 
Basic concepts of pharmacokinetic/Pharmacodynamic (PK/PD) modelling, 
Int. J. Clin. Pharmacol. Ther. {\bf 35} (1997), no.~10, 401--413.

\bibitem{MTCG}
V. F. Morales-Delgado, M.A. Taneco-Hern\'{a}ndez, Cruz Vargas-De-Le\'{o}n \ and\ J.F. G\'{o}mez-Aguilar,
Exact solutions to fractional pharmacokinetic models using multivariate Mittag-Leffler functions, 
Chaos, Solitons \& Fractals {\bf 168} (2023), Paper No.~113164, 15~pp.

\bibitem{Morse}
J. D. Morse, L. I. Cortinez \ and\  B. J. Anderson,
Pharmacokinetic Pharmacodynamic modelling contributions 
to improve paediatric Anaesthesia practice,
J. Clin. Med. {\bf 11} (2022), no.~11, Paper No.~3009.

\bibitem{Mould}
D. R. Mould \ and \ R. N. Upton,
Basic Concepts in Population Modeling, Simulation, 
and Model-Based Drug Development-- Part 2: 
Introduction to Pharmacokinetic Modeling Methods,
CPT Pharmacometrics Syst Pharmacol {\bf 2} (2013), no.~38.

\bibitem{Nielsen}
E. I. Nielsen \ and \ L. E. Friberg,
Pharmacokinetic-Pharmacodynamic Modeling of Antibacterial Drugs. 
Pharmacol Rev.  {\bf 65} (2013), no.~3, 1053--1090.

\bibitem{Pachauri}
N. Pachauri, J. Yadav, A. Rani \ and\ V. Singh,
Modified fractional order IMC design based drug scheduling for cancer treatment,
Computers in Biology and Medicine {\bf 109} (2019), 121--137.

\bibitem{Pandey}
P. Pandey, J. F. G\'{o}mez-Aguilar, M. K. A. Kaabar, Z. Siri \ and\ A. A. A. Mousa,	  
Mathematical modeling of COVID-19 pandemic in India using Caputo-Fabrizio fractional derivative,
Computers in Biology and Medicine {\bf 145} (2022), Paper No.~105518, 8~pp.

\bibitem{Schnider}
T. W. Schnider, C. F. Minto, P. L. Gambus, C. Andresen, 
D. B. Goodale, S. L. Shafer and E. J. Youngs,
The influence of method of administration and covariates 
on the pharmacokinetics of propofol in adult volunteers. 
Anesthesiology {\bf 88} (1998), no.~5, 1170--1182.

\bibitem{Singh}
S. Singh, D. B. Singh, B. Gautam, A. Singh \ and\ N. Yadav,	 
Chapter 19-Pharmacokinetics and pharmacodynamics analysis of drug candidates,
Bioinformatics, Academic Press, 305--316, 2022.

\bibitem{Sopasakis}
P. Sopasakis, H. Sarimveis, P. Macheras, et al. 
Fractional calculus in pharmacokinetics,
J. Pharmacokinet. Pharmacodyn. {\bf 45} (2018), no.~1, 107--125.
	
\bibitem{Turkyilmazoglu}
M. Turkyilmazoglu,
Hyperthermia therapy of cancerous tumor sitting in breast via analytical fractional model,
Computers in Biology and Medicine {\bf 164} (2023), Paper No.~107271, 9~pp.

\bibitem{Verotta}
D. Verotta, 
Fractional dynamics pharmacokinetics-pharmacodynamic models,
J. Pharmacokinet. Pharmacodyn. {\bf 37} (2010), no.~3, 257--276.

\bibitem{Said}
S. Zabi, I. Queinnec, G. Garcia and M. Mazerolles,
Time-optimal control for the induction phase of anesthesia,
IFAC-PapersOnLine {\bf 50} (2017), no.~1,
12197--12202.

\bibitem{Zabi}
S. Zabi, I. Queinnec, S. Tarbouriech, G. Garcia \ and\ M. Mazerolles,
New approach for the control of anesthesia based on dynamics decoupling, 
IFAC-PapersOnLine {\bf 48} (2015), no.~20, 511--516.

\bibitem{ZST}
M. A. Zaitri, C. J. Silva \ and\ D. F. M. Torres,
An analytic method to determine the optimal time for the
induction phase of anesthesia, 
Axioms {\bf 12} (2023), no.~9, Paper No. 867, 15~pp.
{\tt arXiv:2309.04787}

\end{thebibliography}
\end{document}